\makeatletter \@addtoreset{equation}{section}}
\definecolor{Royalblue}{cmyk}{1,0.30,0.2,0.2}
\def\bmat{\left[ \begin{array}}
\def\emat{\end{array} \right]}
\newtheorem{theorem}{Theorem}[section]
\newtheorem{lemma}[theorem]{Lemma}
\newtheorem{corol}[theorem]{Corollary}
\newcommand{\C}{{\mathbb{C}}}
\newcommand{\R}{{\mathbb{R}}}
\newcommand{\Ker}[1][\,]{\mathop{\mathrm{Ker \!}#1}\nolimits}
\newcommand{\IIm}[1][\,]{\mathop{\mathrm{Im}#1}\nolimits}
\newcommand{\bcmm}{}
\newcommand{\bbm}{\begin{bmatrix}}
\newcommand{\ebm}{\end{bmatrix}}
\newcommand{\beq}{\begin{equation}}
\newcommand{\eeq}{\end{equation}}
\newcommand{\bea}{\begin{eqnarray}}
\newcommand{\eea}{\end{eqnarray}}
\newcommand{\bsea}{\begin{subeqnarray}}
\newcommand{\esea}{\end{subeqnarray}}
\def\bmat{\left[ \begin{array}}
\def\emat{\end{array} \right]}
\def\bmat{\left[ \begin{array}}
\def\emat{\end{array} \right]}
\definecolor{darkgreen}{rgb}{0,0.7,0}
 \definecolor{exp}{rgb}{0.5,0.2,0.4}
 \definecolor{expold}{rgb}{0.85,0.7,0}
  \definecolor{msk}{rgb}{0.65,0.3,0.3}
\definecolor{msk2}{rgb}{0.75,0.3,0.1}
\definecolor{r}{rgb}{0.75,0.1,0.4}
\definecolor{exp2}{rgb}{0.85,0.6,0.1}
\definecolor{exp4}{rgb}{0.85,0.3,0.5}
\definecolor{vvs}{rgb}{0.5,0.4,0.7}
\begin{document}

\title{Pairs of $k$-step reachability and $m$-step observability matrices}

\author{ Augusto Ferrante\\
Dipartimento di Ingegneria dell'Informazione\\
Universit\`{a} di Padova\\
via Gradenigo, 6B\\
35131 Padova, Italy \\
\texttt{augusto@dei.unipd.it}
\and
Harald~K. Wimmer
                       \\
Mathematisches Institut\\
Universit\"at W\"urzburg\\
97074 W\"urzburg, Germany\\
\texttt{wimmer@mathematik.uni-wuerzburg.de}
}
\date{\today}

\maketitle

\begin{abstract}
 
Let  $V$ and $W$ be matrices of size  
$ n \times pk$ and $q m \times n   $, respectively.
A necessary and sufficient condition is
given for the existence of a triple $(A,B,C)$
such that  
$V$ a $k$-step reachability matrix  of $(A,B)$
and $W$ an $m$-step observability matrix   of  $(A,C)$. 

\vspace{2cm}
\noindent
\noindent
{\bf Keywords:}\,
   Reachability matrix,  observability matrix, generalized inverses,
common solutions.

\vskip 2cm
\noindent
{\bf AMS Subject Classification (2010):\,}
15A03, 
93B05,    
15A09.  
\tolerance 200

%


\end{abstract}



\section{Introduction}  
Let $A \in K^{n \times n}$, $B \in K^{n \times p} $, $C \in K^{q \times n } $ 
be matrices over a field $K$ { (this is done for the sake of generality: in practical situations one usually has $K=\R$ of $K=\C$)}. 
The matrix 
\[  
R_k (A, B) = \bbm
 B & AB & \dots & A^{k-1} B \ebm  \: \in  K^{n\times  p k } 
\]
is the {\em{$k$-step reachability matrix}} associated to $ (A,B)  $,
 and  
\[
O_m(A, C)  = 
 \bbm  C \\ CA \\ \vdots \\ C A ^{m-1} \ebm \: \in   
K^{ q m  \times n   } 
\]
is the {\em{$m$-step  observability  matrix}} associated to $ (A,C) $. 
\\
%
%
These matrices are a basic  tool in linear systems theory 
(see e.g.\ \cite{DDV},  \cite{KKE}, \cite{So}).   The fact that they play an important role in identification
provides the motivation for our 
study.  In the last decade a family of subspace-based methods for the identification 
of MIMO linear time-invariant systems have been proposed (see \cite{QIN} for an overview).
Essentially, these methods consist of two steps: 
First the observability and reachability matrices of the systems are estimated. 
Then the matrices $(A,B,C)$ of a state space realization of the system are 
recovered from  the observability and reachability matrices.
%
Therefore, given a pair of matrices $ (V , W ) \in  K^{n\times pk}    \times K^{ q m \times n } $
 it is natural to ask whether   $ V $ is a $k$-step reachability matrix and  
$ W $ is  an  $m$-step observability matrix for some triple $(A,B,C)$, 
and if the answer is positive, to establish a parametrization of all possible triples  
$(A,B,C)$ compatible with the given pair.
This is the problem addressed in this short note.

%
%
%

\section{The result}  \label{sct.rslt} 
Our approach involves  
$\{1 \}$-inverses
and a common solution of a pair of matrix equations. 
If  $ F \in  K ^{s \times t} $ then,  according to  the notation of \cite{BG},  we set 
\,$ F^{\{  1 \} } = 
\{ Y \in  K ^{t \times s} ; \;   F Y F = F \} $.
A matrix $ Y \in F^{\{  1 \} } $ is said to be a  
{\em{$\{1\}$-inverse}} of $F$ and denoted by  
$ F^{( 1 ) } $.  
The following lemma is well known  (see \cite [p.\ 54/55]{BG}, \cite[p.\,49] {RM}).

\begin{lemma}  
\label{la.bgv}
Let $F \in K ^{s \times t}$, 
$ H \in K ^{p \times q}$, 
$ C \in 
K ^{s \times p}  , D \in  K ^{t \times q}
 $. 
{\rm{(i)}} The pair of  equations
\beq \label{eq.sm}  
 FX = C   \quad  and \quad X H = D 
\eeq 
have a common solution  $ X \in  K ^{t \times p}  $
if   and only if each equation 
separately has a solution and
\beq \label{eq.vsr}
CH =   F D . 
\eeq
{\rm{(ii)}}
Suppose  the equations \eqref{eq.sm} have a common solution.
Let $ F ^{(1)} \in  F ^{ \{1 \} } $ and 
$ H^{(1) } \in  H ^{\{1\}}$. 
Then  
\[
X_0 = F ^{(1)} C +  (  I  -  F ^{(1) }F ) D  H^{(1) } 
\]
 is a common solution of  \eqref{eq.sm}.
Moreover, 
$X_0 =  D  H^{(1) } +  F ^{(1)} C  ( I - H H^{(1) } )  $.
\\
{\rm{(iii)}}
If $Y_0 $ is a common solution of  \eqref{eq.sm}
 then the  general solution is 
\beq \label{eq.gnsl} 
X = Y_0 + (  I  - F  F ^{(1)} )  Z ( I - H^{(1) } H )  
\eeq 
for arbitrary $Z \in K^{s \times q } $.
\end{lemma}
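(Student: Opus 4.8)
\noindent The plan is to prove parts (ii) and (i) together, since the matrix $X_0$ displayed in (ii) --- and the short computation that verifies it --- is precisely what supplies the nontrivial (``if'') direction of (i); part (iii) is then a separate, equally elementary argument. Before anything else I would record the two standard solvability criteria, which constitute the ``well known'' background already cited: for a fixed choice of $\{1\}$-inverse, $FX=C$ has a solution if and only if $FF^{(1)}C=C$ (one implication comes from applying $FF^{(1)}$ to $C=FX$ and using $FF^{(1)}F=F$, the other because $X=F^{(1)}C$ then solves it), and dually $XH=D$ has a solution if and only if $DH^{(1)}H=D$, with $X=DH^{(1)}$ a particular solution. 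Only these two facts and the defining identities $FF^{(1)}F=F$, $HH^{(1)}H=H$ are needed.

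For the ``only if'' direction of (i), a common solution $X$ solves each equation separately and yields $CH=(FX)H=F(XH)=FD$, i.e.\ \eqref{eq.vsr}. For the ``if'' direction, I would assume each equation is separately solvable --- so $FF^{(1)}C=C$ and $DH^{(1)}H=D$ --- together with \eqref{eq.vsr}, and then verify directly that $X_0=F^{(1)}C+(I-F^{(1)}F)DH^{(1)}$ works: left-multiplying by $F$ gives $FX_0=FF^{(1)}C+(F-FF^{(1)}F)DH^{(1)}=C$, and right-multiplying by $H$ gives $X_0H=F^{(1)}CH+(I-F^{(1)}F)DH^{(1)}H=F^{(1)}(FD)+(I-F^{(1)}F)D=D$, where \eqref{eq.vsr} is used to rewrite $CH$ as $FD$. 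Since the hypothesis of (ii) --- existence of a common solution --- forces exactly these three conditions, the same computation proves (ii). The alternative expression for $X_0$ follows by writing it as $F^{(1)}C+DH^{(1)}-F^{(1)}FDH^{(1)}$, replacing $F^{(1)}FD$ by $F^{(1)}CH$ via \eqref{eq.vsr}, and regrouping as $DH^{(1)}+F^{(1)}C(I-HH^{(1)})$.

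For part (iii) I would argue in two steps. First, every $X$ of the form \eqref{eq.gnsl} is a common solution: the correction term carries one idempotent factor that $F$ annihilates from the left and another that $H$ annihilates from the right (by $FF^{(1)}F=F$ and $HH^{(1)}H=H$), so $FX=FY_0=C$ and $XH=Y_0H=D$. Second, conversely, given any common solution $Y_1$, the difference $N:=Y_1-Y_0$ satisfies $FN=0$ and $NH=0$; hence the relevant one-sided projectors leave $N$ unchanged --- $(I-F^{(1)}F)N=N-F^{(1)}(FN)=N$ and $N(I-HH^{(1)})=N-(NH)H^{(1)}=N$ --- so $N$ is itself of the form of the correction term in \eqref{eq.gnsl} with $Z=N$, whence $Y_1=X$ for that $Z$. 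I do not expect any serious obstacle: the one step that is not purely mechanical is the guess $X_0$ (and the checks that it solves both equations \emph{simultaneously} and that its two displayed forms coincide), and the single point requiring care in (iii) is to keep each idempotent on the side where $F$, respectively $H$, actually kills it and where the sizes are compatible with $Y_0\in K^{t\times p}$.
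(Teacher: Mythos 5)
Your proof is correct, and it is essentially the classical argument: the paper itself gives no proof of Lemma~\ref{la.bgv}, citing it as well known from Ben-Israel--Greville and Rao--Mitra, and your verification --- the solvability criteria $FF^{(1)}C=C$ and $DH^{(1)}H=D$, the direct check that $X_0=F^{(1)}C+(I-F^{(1)}F)DH^{(1)}$ solves both equations of \eqref{eq.sm} once \eqref{eq.vsr} is used to trade $CH$ for $FD$, the regrouping that yields the second expression for $X_0$, and the two-step argument for the general solution (the correction term is killed by $F$ on the left and by $H$ on the right; any difference of two common solutions is fixed by the relevant one-sided projectors) --- is exactly the proof found in those references, so your proposal supplies the omitted details rather than a genuinely different route. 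One point should be made explicit: in part (iii) you in fact proved a \emph{corrected} version of the statement. As printed, \eqref{eq.gnsl} has the correction term $(I-FF^{(1)})Z(I-H^{(1)}H)$ with $Z\in K^{s\times q}$; that term lies in $K^{s\times q}$, hence is not even size-compatible with $Y_0\in K^{t\times p}$ unless $s=t$ and $p=q$, and $F$ does not annihilate $I-FF^{(1)}$ from the left. The intended formula is $X=Y_0+(I-F^{(1)}F)Z(I-HH^{(1)})$ with $Z\in K^{t\times p}$, which is precisely the form you use in both directions of your argument, and it is also the form the paper actually applies in Theorem~\ref{prop.2}(ii), where the term $(I-W_{L}^{(1)}W_{L})Z(I-V_{L}V_{L}^{(1)})$ is $(I-F^{(1)}F)Z(I-HH^{(1)})$ for $F=W_{L}$, $H=V_{L}$, $Z\in K^{n\times n}$. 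So your instinct to keep each idempotent on the side where $F$, respectively $H$, kills it, and to take $Z$ of the same size as $Y_0$, was the right one; just state explicitly that you are proving the evidently intended corrected formula rather than \eqref{eq.gnsl} as literally written.
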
 

Condition \eqref{eq.vsr} can be traced back to a 1910 paper of  
 Cecioni~\cite{Ce}. 
In the following theorem it appears in the form \eqref{eq.nsc-co3}  
  and provides the required interlocking  
between reachability and observability matrices.


\begin{theorem} \label{prop.2} 
Let $ V \in   K^{n\times pk}  $ and $ W  \in    K^{n\times  qm}  $
be given, and
let 
\[
 V=\bbm V_0 &  V_1 & \dots &   V_{k-1} \ebm  
\quad and \quad 
W=\bbm W_0^{T} &  W_1^{T} & \dots &   W_{m -1}^{T} \ebm  ^{T} 
\]
be partitioned into blocks $V_i \in   K^{n \times p}$   
and 
$W_i\in K^{q\times n}$, respectively. 
Set 
\[
V_{L} = \bbm V_0 &  V_1 & \dots &   V_{k- 2} \ebm
\quad 
and  \quad V^U =  \bbm V_1&  V_2 & \dots &   V_{k-1} \ebm , 
\]
and 
\[
W_{L} = \bbm  W_0  \\ W_1 \\ \vdots \\   W_{m-  2} \ebm
\quad 
and  \quad   W^U =  \bbm W_1\\  W_2 \\\vdots \\  W_{m-1}   \ebm .
\]
{\rm{(i)}}
Then there exists a triple 
$ (A,B,C) \in K^{n \times n } \times K^{n \times p } \times K^{q \times n} $
such that 
\beq \label{eq.rsbd} 
 V = R_{k} (A, B) \quad and \quad W = O_{m} (A, C) 
\eeq 
if and only if
\bea    \label{eq.nsc-co1}
\Ker   V _{L}  \subseteq  \Ker  V^U, \\
\label{eq.nsc-co2}
\IIm   W _{L} \supseteq  \IIm  W^{U}, \eea 
and 
\beq 
\label{eq.nsc-co3} 
W_{L}V^{U} =   W^U V _{L}  . 
\eeq 
{\rm{(ii)}}
Suppose the conditions 
 \eqref{eq.nsc-co1} - \eqref{eq.nsc-co3} are satisfied. 
Then we have 
 \eqref{eq.rsbd}  if and only if 
\[
A =  W_{L}^{(1)} W^{U} + 
(I - W_{L}^{(1)} W_{L} )   V^{U}  V_{L}  ^{(1) } 
+   (I - W_{L}^{(1)} W_{L}   ) Z ( I - V_L V_L ^{(1) }  ) , 
\, Z \in K ^{ n\times n} ,  
\]
and  $ (B, C ) = (V_0, W_0)$. 
\end{theorem}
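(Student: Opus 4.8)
The plan is to translate the requirement \eqref{eq.rsbd} into a pair of matrix equations in the single unknown $A$ and then to apply Lemma~\ref{la.bgv}. I would first observe that $V_0$ is the leading block of $R_k(A,B)$ and $W_0$ is the leading block of $O_m(A,C)$, so \eqref{eq.rsbd} forces $B=V_0$ and $C=W_0$; there is no remaining freedom in $B$ and $C$. Next, for $0\le i\le k-2$ the $(i+1)$-st block of $R_k(A,B)$ is $A^{i+1}B=A\,(A^iB)$ and the $(i+1)$-st block of $O_m(A,C)$ is $CA^{i+1}=(CA^i)\,A$; hence $V=R_k(A,B)$ and $W=O_m(A,C)$ imply $AV_i=V_{i+1}$ and $W_iA=W_{i+1}$ for all admissible $i$, i.e.
\[
AV_L=V^U\qquad\text{and}\qquad W_LA=W^U .
\]
Conversely, if $A$ satisfies these two identities and one sets $B=V_0$, $C=W_0$, then an immediate induction on $i$ yields $V_i=A^iB$ and $W_i=CA^i$ for all $i$, that is, \eqref{eq.rsbd}. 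Thus \eqref{eq.rsbd} is equivalent to the conjunction: $(B,C)=(V_0,W_0)$, and $A$ is a common solution of the pair \eqref{eq.sm} with $F:=W_L$, $H:=V_L$ and right-hand sides $W^U$ and $V^U$.

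For part (i) I would then read off Lemma~\ref{la.bgv}(i). The equation $W_LA=W^U$ is solvable in $A$ iff every column of $W^U$ belongs to $\IIm W_L$, which is \eqref{eq.nsc-co2}; the equation $AV_L=V^U$ is solvable iff every row of $V^U$ belongs to the row space of $V_L$, equivalently $\Ker V_L\subseteq\Ker V^U$, which is \eqref{eq.nsc-co1}; and the consistency condition \eqref{eq.vsr} here reads $W^UV_L=W_LV^U$, which is \eqref{eq.nsc-co3}. Hence a common solution $A$ exists iff \eqref{eq.nsc-co1}--\eqref{eq.nsc-co3} hold, and combined with $(B,C)=(V_0,W_0)$ this gives (i).

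For part (ii) I would assume \eqref{eq.nsc-co1}--\eqref{eq.nsc-co3}. By the reduction above, the triples satisfying \eqref{eq.rsbd} are exactly those with $(B,C)=(V_0,W_0)$ and $A$ ranging over the common solutions of $AV_L=V^U$, $W_LA=W^U$. Lemma~\ref{la.bgv}(ii), applied with $F=W_L$ and $H=V_L$, produces the particular common solution $A_0=W_L^{(1)}W^U+(I-W_L^{(1)}W_L)\,V^UV_L^{(1)}$, and Lemma~\ref{la.bgv}(iii) then describes all common solutions as $A=A_0+(I-W_L^{(1)}W_L)\,Z\,(I-V_LV_L^{(1)})$ with $Z\in K^{n\times n}$ arbitrary; this is exactly the asserted parametrization.

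Because the whole argument is in essence a dictionary translation, I do not anticipate an isolated difficult step; the main thing to be careful about is the bookkeeping. Specifically: (a) verifying that the two block-shift identities really do capture ``$V$ is a $k$-step reachability matrix and $W$ an $m$-step observability matrix'' with nothing left over, including the degenerate cases $k=1$ or $m=1$, where $V_L,V^U$ (respectively $W_L,W^U$) are empty matrices and the corresponding conditions, together with the corresponding equation, are vacuously satisfied; and (b) tracking the block sizes so that the one-sided projectors furnished by Lemma~\ref{la.bgv} are attached to the correct factors and the free parameter $Z$ comes out of size $n\times n$.
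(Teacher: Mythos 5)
Your proposal is correct and follows essentially the same route as the paper: both reduce \eqref{eq.rsbd} to $(B,C)=(V_0,W_0)$ together with the shift equations $AV_L=V^U$, $W_LA=W^U$, identify \eqref{eq.nsc-co1} and \eqref{eq.nsc-co2} as the separate solvability conditions, and then invoke Lemma~\ref{la.bgv}(i) for \eqref{eq.nsc-co3} and Lemma~\ref{la.bgv}(ii)--(iii) for the parametrization in part (ii). No substantive difference from the paper's argument.
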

\begin{proof}  
(i) 
The inclusion \eqref{eq.nsc-co1} holds if and only if  
\,$A _r   V_{L} =  V^{U} $\,
 for some 
$A_r  \in K^{n \times n} $.  
Hence  \eqref{eq.nsc-co1} is equivalent to
\,$
V_i = A _r V_{i-1} $, $i = 1, \dots , k $.
Thus we have  \eqref{eq.nsc-co1}  if and only if 
\[
V = \bbm V_0 &  A_r V_0 & \dots &   A_r ^{k-1} V_0  \ebm = R_k(A_r, V_0)
\]
for some $A _r\in K^{n \times n} $.
Similarly,  \eqref{eq.nsc-co2}
is valid   if and only if  
\,$   W_{L}  A_o=  W^{U} $\,
 for some 
$A_o  \in K^{n \times n} $.  
Hence \eqref{eq.nsc-co2} holds if and only if 
\,$
W = O_{m} (A_o, W_0) $\, 
for some $A _o\in K^{n \times n} $.
Now suppose 
$ V = R_k(A_r, V_0) $  and $ W = O_{m} (A_o, W_0) $. 
Then there exists a matrix $A$ such that $ A = A_r = A_o$
if and only if   
 each of the  two  matrix equations 
$X   V_{L} =  V^{U} $ and $   W_{L}  X=  W^{U} $
is consistent and 
there exists a common solution. 
We apply 
  Lemma~\ref{la.bgv}(i)
and conclude that   the identity~\eqref{eq.nsc-co3}  
is  necessary and sufficient for the  existence of a common solution~$X$.
Part (ii) follows from  Lemma~\ref{la.bgv}(ii)
and (iii). 
\end{proof}

If we take $W= 0 $ in  Theorem~\ref{prop.2} 
then we obtain the following.

\begin{corol} \label{cor.bl} 
Let $ V \in   K^{n\times (p\cdot k)}  $ be given.
Then there exists a pair $ (A, B) \in K^{n \times n } \times K^{n \times p }$
such that $ V =  R_{k} (A, B) $
if and only if 
\,$\Ker   V _{L}  \subseteq  \Ker  V^U$. 
In that case we have  $ V =  R_{k} (A, B) $
if and only if 
\[
A =   V^{U}  V_{L}^{(1)}+  Z( I - V_L V_L ^{(1) } ), 
\, \, Z \in K ^{ n\times n} ,  
\]
and  $ B = V_0$. 
\end{corol}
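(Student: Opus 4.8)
The plan is to derive Corollary~\ref{cor.bl} directly from Theorem~\ref{prop.2} by specializing to the degenerate case $W=0$, i.e.\ by choosing $q=1$ (or any $q$) and setting every block $W_i=0$, so that $W\in K^{n\times qm}$ is the zero matrix. The point of the corollary is that when there is no observability constraint, the interlocking condition \eqref{eq.nsc-co3} and the image condition \eqref{eq.nsc-co2} both trivialize, and only the kernel condition \eqref{eq.nsc-co1} on $V$ survives; simultaneously the formula for $A$ in Theorem~\ref{prop.2}(ii) collapses because the projector $I-W_L^{(1)}W_L$ becomes the identity.

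First I would verify that the three hypotheses of Theorem~\ref{prop.2} reduce correctly. With all $W_i=0$ we have $W_L=0$ and $W^U=0$, so $\IIm W_L=\{0\}=\IIm W^U$, and \eqref{eq.nsc-co2} holds automatically; likewise $W_LV^U=0=W^UV_L$, so \eqref{eq.nsc-co3} holds automatically. Hence the conjunction \eqref{eq.nsc-co1}--\eqref{eq.nsc-co3} is equivalent to the single inclusion $\Ker V_L\subseteq\Ker V^U$, which is exactly the stated necessary and sufficient condition. For the realization part, note that $0$ is a $\{1\}$-inverse of the zero matrix (indeed $0\cdot 0\cdot 0=0$), so one may take $W_L^{(1)}=0$; then $W_L^{(1)}W^U=0$ and $I-W_L^{(1)}W_L=I$, and the parametrization in Theorem~\ref{prop.2}(ii) becomes
\[
A = V^U V_L^{(1)} + Z\,(I-V_LV_L^{(1)}),\qquad Z\in K^{n\times n},
\]
with $B=V_0$, which is the asserted formula. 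The existence of a triple $(A,B,C)$ with $V=R_k(A,B)$ and $W=O_m(A,C)=0$ is the same as the existence of a pair $(A,B)$ with $V=R_k(A,B)$ together with the constraint $O_m(A,C)=0$; but with $C=W_0=0$ we automatically get $O_m(A,C)=0$ regardless of $A$, so the observability requirement imposes nothing and the statement about pairs $(A,B)$ follows.

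The only mildly delicate point is making sure the equivalence in Theorem~\ref{prop.2}(ii) is being read in the right direction: it asserts that \eqref{eq.rsbd} holds \emph{if and only if} $A$ has the displayed form and $(B,C)=(V_0,W_0)$. Dropping the (vacuous) condition $W=O_m(A,C)$ and the coordinate $C$, this says precisely that $V=R_k(A,B)$ holds if and only if $A$ has the form above and $B=V_0$, which is the content of the corollary. So I expect essentially no obstacle here — the whole proof is a substitution check — and the brief write-up is simply: take $W=0$, observe that \eqref{eq.nsc-co2} and \eqref{eq.nsc-co3} are vacuous, that $W_L^{(1)}=0$ is admissible so the projector $I-W_L^{(1)}W_L$ equals $I$, and invoke Theorem~\ref{prop.2}.
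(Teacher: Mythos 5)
Your proposal is correct and follows exactly the paper's route: the paper obtains the corollary simply by setting $W=0$ in Theorem~\ref{prop.2}, which is precisely your substitution argument (with the useful added detail that $W_L^{(1)}=0$ is an admissible $\{1\}$-inverse, so $I-W_L^{(1)}W_L=I$ and conditions \eqref{eq.nsc-co2}--\eqref{eq.nsc-co3} become vacuous). No gap; your write-up just makes explicit the checks the paper leaves to the reader.
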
 

For the  special  case $p=1$  and  $k = n$,  
and more detailed results involving  companion matrices,
 we refer to
\cite{FW}.

\end{document}